\newcommand{\R}{\mathbb{R}} 
\newcommand{\N}{\mathbb{N}}
\newtheorem{theorem}{Theorem}[section]
\newtheorem{lemma}[theorem]{Lemma}
\theoremstyle{definition}
\newtheorem{remark}[theorem]{Remark}
\numberwithin{equation}{section}
\title{Optimal shapes for positivity preserving\footnote{Research partially funded by the Deutsche Forschungsgemeinschaft (DFG, German Research
Foundation) - EI 1155/1}.}
 \author{Sascha Eichmann\thanks{The author thanks Prof. Reiner Sch\"atzle for fruitful discussions.}\\
 Mathematisch-Naturwissenschaftliche Fakultät,\\
 Eberhard Karls Universität Tübingen,\\
 Auf der Morgenstelle 10,\\
 D-72076 Tübingen, Germany\\
 E-Mail: \href{mailto:sascha.eichmann@math.uni-tuebingen.de}{sascha.eichmann@math.uni-tuebingen.de}}
\begin{document}
 \maketitle

\begin{abstract}
We are looking for an optimal convex domain on which the boundary value problem
$$\left\{\begin{array}{cc}(-\Delta)^2 u_\gamma-\gamma\Delta u_\gamma = f,& \mbox{ in }\Omega\\ u_\gamma=\partial_\nu u_\gamma=0,& \mbox{ on }\partial\Omega\end{array}\right.$$
admits a nonnegative solution for the most $\gamma$, if $f$ is a given nonnegative function.
\end{abstract}
\textbf{Keywords.} Positivity preserving, Boggio-Hadamard conjecture, shape optimisation\\ 
\textbf{MSC.} 35B09, 35B30, 35G15, 49Q10

\section{Introduction}
\label{sec:1}
The Boggio-Hadamard conjecture states, that for any convex domain $\Omega\subseteq\R^n$ any solution to
$$\left\{\begin{array}{cc}(-\Delta)^2 u = f,& \mbox{ in }\Omega\\ u=\partial_\nu u=0,& \mbox{ on }\partial\Omega\end{array}\right.$$
satisfies $u\geq 0$, if $f\geq 0$. Here $\nu$ is the outer unit normal of $\Omega$. 
This differential equation is also called the clamped plate equation (see e.g. \cite{PolyHarmBoundValue} for more  context). 
In it $\Omega$ is a thin plate clamped at the boundary and subject to an orthogonal force $f$. 
Then $u$ is the displacement of $\Omega$. Hence the conjecture can also be restated as:
$$\mbox{Does upward pushing imply upward bending?}$$
This conjecture was substantiated by Boggio by some explicit formulas for the Greens function in \cite{BoggioGreenFunction} in case $\Omega$ is a ball (see also the letters by Hadamard \cite{Hadamard1} and \cite{Hadamard2}). 
First counterexamples to this conjecture were given by Duffin in \cite{DuffinStrip} and a little later by Garabedian in \cite{GarabedianEllipse}. Today we even have counterexamples with constant $f$, see \cite{GrunauSweersSignChangeUniform1} and \cite{GrunauSweersSignChangeUniform2}.
On the other hand by e.g. conformal invariance one can use the ball to construct other domains with positivity preserving, see \cite{DallAcquaSweers2004}.
In general it is not known, what the driving force reagrding $\Omega$ behind positivity preserving or a lack thereof is, see e.g. the discussion in \cite[§ 5, § 6]{PolyHarmBoundValue} and the references therein.\\
For general elliptic differential operators of higher order positivity preserving cannot be expected, see  \cite{GrunauRomaniSweers2020}.

In \cite{CassaniTarsia} a related problem to \eqref{eq:1_0_1} has been discussed, i.e. whether for given $f\geq 0$ the boundary value problem
\begin{equation}
 \label{eq:1_0_1}
 \left\{\begin{array}{cc}(-\Delta)^2 u_\gamma-\gamma\Delta u_\gamma = f,& \mbox{ in }\Omega\\ u_\gamma=\partial_\nu u_\gamma=0,& \mbox{ on }\partial\Omega\end{array}\right.
\end{equation}
has a positive solution $u_\gamma$, if $\gamma$ is big enough. $\gamma$ is called tension and this kind of equation was first used by Bickley in \cite{Bickley} to model the membrane in microphones. 
Here the motivation differs: If $\gamma$ becomes bigger, the influence of the laplace term $-\Delta u_\gamma$ should increase. 
Since $-\Delta$ satisfies a maximum principle, i.e. also positivity preserving, the chances, that $u_\gamma$ is nonnegative increases if $\gamma$ increases.
This statement was proven rigorously in all dimensions in \cite{EichmannSchaetzlePosHighTens} and is cited in appendix \ref{sec:A} for the readers convenience (see also \cite{Grunau2002OneDimPosPres} for positivity preserving for all $\gamma\geq 0$ in dimension one).\\
Here we like to search for convex domains in which we have the 'most' positivity preserving, i.e. for the largest range of $\gamma\in\R$. 
Let us make the notion of our agenda more precise by letting
$R>0$, $f\in L^\infty(B_R(0))$, $c_1>0$, $f\geq0$, $n=2,3$. Now we consider the boundary value problem
\begin{equation}
 \label{eq:1_1}
 \left\{\begin{array}{cc}(-\Delta)^2 u_\gamma-\gamma\Delta u_\gamma = f,& \mbox{ in }\Omega\\ u_\gamma=\partial_\nu u_\gamma=0,& \mbox{ on }\partial\Omega\end{array}\right.
\end{equation}
for any domain $\Omega\subseteq B_R(0)\subseteq \R^n$.  
Since we like to solve this boundary value problem even for nonsmooth $\Omega$ we define $u_\gamma$ to be a (weak) solution of \eqref{eq:1_1}, if and only if $u_\gamma\in W^{2,2}_0(\Omega)$ and for all $\varphi\in C^\infty_0(\Omega)$ we have
\begin{equation}
 \label{eq:1_2}
 \int_\Omega \Delta u_\gamma\Delta\varphi + \gamma\nabla u_\gamma\cdot\nabla\varphi - f\varphi =0.
\end{equation}
By $\mu_1(\Omega)$ we will denote the buckling load, a kind of first eigenvalue, given by
\begin{equation}
 \label{eq:1_3}
 \mu_1(\Omega):=\inf_{u\in W^{2,2}_0(\Omega)-\{0\}}\frac{\int_\Omega|\Delta u|^2}{\int_\Omega|\nabla u|^2}.
\end{equation}
$\mu_1(\Omega)$ acts as a lower bound for $\gamma$, because upon reaching it, we would have nonuniqueness or nonexistence of the boundary value problem \eqref{eq:1_1}. 
See e.g. \cite[§ 3.2]{PolyHarmBoundValue} for more details regarding the buckling load.
Next we introduce the set in which we will minimise:
\begin{equation}
 \label{eq:1_4}
 M:=\{\Omega\subseteq B_R(0)|\ \Omega\mbox{ is a convex domain and }\mathcal{L}^n(\Omega)=c_1\}.
\end{equation}
We define an energy on $M$ by
\begin{equation}
 \label{eq:1_5}
 \gamma_f(\Omega):=\inf\{\gamma > -\mu_1(\Omega)|\ \forall \tilde{\gamma}\geq\gamma\mbox{ the weak solution to \eqref{eq:1_1} satisfies }u_{\tilde{\gamma}}\geq 0\mbox{ a.e.}\} 
\end{equation}
The result \cite[Thm. 1.1]{EichmannSchaetzlePosHighTens} (see also the appendix \ref{sec:A}) guarantees, that $\gamma_f$ is finite for bounded domains $\Omega$ with boundary in $C^4$.
This note is dedicated to proving the following theorem:
\begin{theorem}
 \label{1_1}
 There exists an $\Omega_f\in M$, such that for all $\Omega\in M$ we have
 $$\gamma_f(\Omega_f)\leq \gamma_f(\Omega).$$
\end{theorem}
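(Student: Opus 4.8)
The plan is to apply the direct method of the calculus of variations on $M$. Set $\gamma_0:=\inf_{\Omega\in M}\gamma_f(\Omega)$. This infimum is finite: if $c_1=\mathcal{L}^n(B_R(0))$ then $M=\{B_R(0)\}$ and there is nothing to prove, while if $c_1<\mathcal{L}^n(B_R(0))$ then $M$ contains a ball of radius $<R$, whose (smooth) boundary is in $C^4$, so $\gamma_f$ is finite there by \cite[Thm.~1.1]{EichmannSchaetzlePosHighTens}. Choose a minimizing sequence $(\Omega_k)_{k\in\N}\subseteq M$ with $\gamma_f(\Omega_k)\to\gamma_0$; in particular $\gamma_f(\Omega_k)<\infty$ for large $k$. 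By the Blaschke selection theorem the closures $\overline{\Omega_k}$ contain a subsequence converging, in the Hausdorff distance, to a compact convex set $K\subseteq\overline{B_R(0)}$. Since the volume functional is continuous on compact convex sets with respect to the Hausdorff distance, $\mathcal{L}^n(K)=c_1>0$, so $K$ is $n$-dimensional; hence $\Omega_f:=\operatorname{int}(K)$ is a convex domain with $\mathcal{L}^n(\Omega_f)=c_1$, with $\Omega_f\subseteq\operatorname{int}(\overline{B_R(0)})=B_R(0)$, and with $\overline{\Omega_f}=K$. Thus $\Omega_f\in M$ and $\overline{\Omega_k}\to\overline{\Omega_f}$ in the Hausdorff distance along the chosen subsequence (not relabelled). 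Since $\Omega_f\in M$ gives $\gamma_f(\Omega_f)\geq\gamma_0$ for free, everything reduces to the lower-semicontinuity inequality $\gamma_f(\Omega_f)\leq\gamma_0$.

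The tool for this will be the \emph{stability of convex domains}: if convex domains $\Omega_k$ converge in the Hausdorff distance to the convex domain $\Omega_f$, then $W^{2,2}_0(\Omega_k)\to W^{2,2}_0(\Omega_f)$ in the sense of Mosco. From this several things follow. First, $\mu_1(\Omega_k)\geq c_0>0$ uniformly in $k$ (the Poincar\'e inequality on $W^{1,2}_0$ has a constant controlled by $\operatorname{diam}(\Omega_k)\leq 2R$, and $\|\nabla v\|_{L^2}^2=-\int v\,\Delta v\leq\|v\|_{L^2}\|\Delta v\|_{L^2}$ for $v\in W^{2,2}_0$), and testing \eqref{eq:1_2} with the solution itself together with $\mu_1(\Omega_k)>-\gamma_f(\Omega_k)\to-\gamma_0$ and an easy recovery-sequence estimate $\limsup_k\mu_1(\Omega_k)\leq\mu_1(\Omega_f)$ gives $\gamma_0\geq-\mu_1(\Omega_f)$. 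Now fix any $\gamma>\gamma_0$ and any $\widetilde\gamma\geq\gamma$. For $k$ large we have $\gamma_f(\Omega_k)<\gamma\leq\widetilde\gamma$ and $-\mu_1(\Omega_k)<\widetilde\gamma$, so the weak solution $u^{\Omega_k}_{\widetilde\gamma}\in W^{2,2}_0(\Omega_k)$ exists and satisfies $u^{\Omega_k}_{\widetilde\gamma}\geq0$ a.e. Testing \eqref{eq:1_2} with $u^{\Omega_k}_{\widetilde\gamma}$, using coercivity from $\mu_1(\Omega_k)+\widetilde\gamma\to\mu_1(\Omega_f)+\widetilde\gamma>0$, the identity $\int_{\Omega_k}|\Delta v|^2=\int_{\Omega_k}|D^2v|^2$ on $W^{2,2}_0(\Omega_k)$, and $|\int f v|\leq\|f\|_{L^\infty}\mathcal{L}^n(B_R(0))^{1/2}\|v\|_{L^2}$, one bounds $\|u^{\Omega_k}_{\widetilde\gamma}\|_{W^{2,2}(B_R(0))}$ uniformly in $k$ (functions extended by $0$). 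Along a further subsequence $u^{\Omega_k}_{\widetilde\gamma}\rightharpoonup u$ weakly in $W^{2,2}$, strongly in $W^{1,2}$ and in $L^2$, whence $u\geq0$ a.e. Mosco convergence now identifies $u$: its condition (M2) gives $u\in W^{2,2}_0(\Omega_f)$, and condition (M1) lets one pass to the limit in $\int_{\Omega_k}\Delta u^{\Omega_k}_{\widetilde\gamma}\Delta\varphi+\widetilde\gamma\nabla u^{\Omega_k}_{\widetilde\gamma}\cdot\nabla\varphi-f\varphi=0$ (weak$\times$strong convergence of the two quadratic-form terms) to obtain \eqref{eq:1_2} on $\Omega_f$ for all $\varphi\in C^\infty_0(\Omega_f)$; by uniqueness $u=u^{\Omega_f}_{\widetilde\gamma}$, so $u^{\Omega_f}_{\widetilde\gamma}\geq0$ a.e. Since $\widetilde\gamma\geq\gamma$ was arbitrary and $\gamma>\gamma_0\geq-\mu_1(\Omega_f)$, this shows $\gamma_f(\Omega_f)\leq\gamma$; letting $\gamma\downarrow\gamma_0$ gives $\gamma_f(\Omega_f)\leq\gamma_0$, hence $\gamma_f(\Omega_f)=\gamma_0=\min_{\Omega\in M}\gamma_f(\Omega)$, which is the assertion.

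The one genuinely delicate ingredient, and the step where I expect the bulk of the work, is the stability/Mosco statement for the \emph{second-order} Dirichlet space $W^{2,2}_0$. Condition (M2) should be handled as follows: a weak $W^{2,2}(\R^n)$-limit $v$ of functions $v_k\in W^{2,2}_0(\Omega_k)$ vanishes a.e. on $\R^n\setminus\overline{\Omega_f}$ (Hausdorff convergence, via separating hyperplanes of the convex $\Omega_k$, pushes the supports of the $v_k$ out of every compact subset of $\R^n\setminus\overline{\Omega_f}$), and since $\Omega_f$ is a bounded convex, hence Lipschitz, domain one has $W^{2,2}_0(\Omega_f)=\{v\in W^{2,2}(\R^n):v=0\text{ a.e. on }\R^n\setminus\Omega_f\}$, so $v\in W^{2,2}_0(\Omega_f)$. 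Condition (M1) reduces, by density of $C^\infty_0(\Omega_f)$ and a diagonal argument, to showing that any $\varphi\in C^\infty_0(\Omega_f)$ admits strong approximants in $W^{2,2}_0(\Omega_k)$; here one uses that dilations of $\overline{\Omega_f}$ about an interior point are strictly interior to $\Omega_f$, that a sufficiently small such dilation is therefore contained in $\Omega_k$ for $k$ large (Hausdorff convergence again), and that composing $\varphi$ with such dilations converges to $\varphi$ in $W^{2,2}$. Convexity of \emph{all} the competitors is precisely what makes this go through — no escaping mass (everything sits in $B_R(0)$), good interior geometry, and the extension-by-zero characterisation of the zero-trace space — so I would isolate the Mosco convergence as a separate lemma and feed it into the variational scheme above.
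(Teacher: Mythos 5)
Your proof is correct and follows the same direct-method scheme as the paper (Blaschke selection for the minimising sequence, continuity of the volume, and a lower-semicontinuity step carried by stability of the Dirichlet spaces under Hausdorff convergence of convex domains); the differences lie in how that lower-semicontinuity step is organised. You package it as Mosco convergence of $W^{2,2}_0(\Omega_m)$ to $W^{2,2}_0(\Omega_f)$. Your condition (M1) is exactly the paper's Theorem \ref{2_2} (compact subsets of $\Omega_f$ lie in $\Omega_m$ for large $m$, via Hahn--Banach separation) combined with density of $C^\infty_0(\Omega_f)$. For (M2) you invoke the characterisation of $W^{2,2}_0$ on Lipschitz domains as the zero-extension space, whereas the paper proves the needed inclusion only for the specific limit $u_0$, via the dilation $v_\lambda(x)=u_0(\lambda x)$, which has compact support in $\Omega_f$ by convexity and converges weakly to $u_0$; so the paper stays self-contained where you cite a true but nontrivial external theorem whose standard proof for convex domains is precisely that dilation. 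Two of your choices improve on the paper: you track the quantifier \emph{for all} $\tilde\gamma\geq\gamma$ in the definition \eqref{eq:1_5} explicitly (the paper runs the compactness argument for a single $\gamma$ and leaves the extension to all $\tilde\gamma\geq\gamma$ implicit), and you obtain $\operatorname{spt}(u_0)\subseteq\overline{\Omega_f}$ directly from $\overline{\Omega}_m\subseteq(\overline{\Omega_f})_\varepsilon$, bypassing the paper's detour through uniform convergence and the attendant restriction to $n=2,3$ at that point. One imprecision: uniform coercivity for large $m$ does not follow from $\mu_1(\Omega_m)\rightarrow\mu_1(\Omega_f)$, which you do not establish (only $\limsup_m\mu_1(\Omega_m)\leq\mu_1(\Omega_f)$), but it does follow from $\mu_1(\Omega_m)\geq-\gamma_f(\Omega_m)\rightarrow-\gamma_0>-\tilde\gamma$, which you also record; this is the paper's $\delta,\varepsilon'$ bookkeeping in different clothing.
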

This $\Omega_f$ is an optimal domain for positivity preserving, since the range for $\gamma\in\R$ is as large as possible. 
The strategy of the proof is as follows:
We use the direct variational method. Compactness for a sequence of domains will be achieved w.r.t. Hausdorff distance, see section \ref{sec:2}.
Lower-semicontinuity is at the heart of the proof, in which the convexity of the domains will play a crucial role, see Thm. \ref{2_2}.
The proof itself is given in section \ref{sec:3}. 
Finally we discuss some open questions in section \ref{sec:4}.

\section{Hausdorff distance and convexity}
\label{sec:2}
Here we collect some important results concerning the Hausdorff distance of sets in $\R^n$. Some of them are focused on convex sets. The Hausdorff distance is defined as
\begin{equation}
 \label{eq:2_1}
 d_H(A,B):=\inf\{\varepsilon>0|\ A\subseteq B_\varepsilon\mbox{ and }B\subseteq A_\varepsilon\}.
\end{equation}
Here
$$A_\varepsilon:=\bigcup_{x\in A} B_\varepsilon(x) = \{x\in\R^n|\ d(x,A) <\varepsilon\} .$$
For compact sets $d_H$ is a metric. Furthermore for all compact sets $K\subset \R^n$ the set
\begin{equation*}
 M_K:=\{\tilde{K}\subseteq K|\ \tilde{K}\mbox{ compact}\}
\end{equation*}
is compact with respect to $d_H$ (see e.g. \cite[§ 2.10.21]{Federer}).  Furthermore we have the following result for convex sets, which is also part of the famous Blaschke selection Theorem (see also \cite{BeerConvexMeasure74} for a different approach concerning the convergence of Lebesgue measure and again \cite[§ 2.10.21]{Federer} for the convexity of the limit).
We define convergence of convex domains, i.e. for open convex sets $\Omega_m,\Omega\subseteq\R^n$ as
\begin{equation}
\label{eq:2_1_2}
 d_H(\overline{\Omega}_m,\overline{\Omega})\rightarrow 0,\ m\rightarrow\infty.
\end{equation}

The result is then as follows:
\begin{lemma}[\cite{Blaschke1916} p.61 \& p.62]
 \label{2_1}
 Let $\Omega_m\subseteq\R^n$ be a sequence of convex domains converging w.r.t. to $d_H$ to $\Omega\subseteq\R^n$. Then
 $$\mathcal{L}^n(\Omega_m)\rightarrow \mathcal{L}^n(\Omega)$$
 and $\Omega$ is convex.
\end{lemma}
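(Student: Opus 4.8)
The plan is to prove the two assertions directly from the two set inclusions encoded in $d_H$-convergence: choose $\varepsilon_m\downarrow 0$ with $d_H(\overline{\Omega}_m,\overline{\Omega})\le\varepsilon_m$, so that $\overline{\Omega}\subseteq(\overline{\Omega}_m)_{\varepsilon_m}$ and $\overline{\Omega}_m\subseteq(\overline{\Omega})_{\varepsilon_m}$. (This is classical, being part of the Blaschke selection theorem, but here is a self-contained route. We may assume all sets lie in one fixed ball, which holds in our application since $\Omega_m\subseteq B_R(0)$ forces $\overline{\Omega}\subseteq\overline{B_R(0)}$; otherwise one exhausts by balls.)

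\emph{Convexity of the limit.} Let $x,y\in\overline{\Omega}$ and $\lambda\in[0,1]$. The inclusion $\overline{\Omega}\subseteq(\overline{\Omega}_m)_{\varepsilon_m}$ provides $x_m,y_m\in\overline{\Omega}_m$ with $|x_m-x|,|y_m-y|\le\varepsilon_m$. Being the closure of a convex set, $\overline{\Omega}_m$ is convex, so $\lambda x_m+(1-\lambda)y_m\in\overline{\Omega}_m\subseteq(\overline{\Omega})_{\varepsilon_m}$; hence $d(\lambda x+(1-\lambda)y,\overline{\Omega})\le 2\varepsilon_m\to 0$, and since $\overline{\Omega}$ is closed, $\lambda x+(1-\lambda)y\in\overline{\Omega}$. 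Thus $\overline{\Omega}$, and with it $\Omega$, is convex.

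\emph{Convergence of volumes.} For the easy inequality $\limsup_m\mathcal{L}^n(\Omega_m)\le\mathcal{L}^n(\Omega)$ only $\overline{\Omega}_m\subseteq(\overline{\Omega})_{\varepsilon_m}$ is needed and convexity plays no role: $\mathcal{L}^n(\Omega_m)\le\mathcal{L}^n((\overline{\Omega})_{\varepsilon_m})$, and since the neighbourhoods $(\overline{\Omega})_\varepsilon$ decrease to $\overline{\Omega}$ as $\varepsilon\downarrow 0$ and all have finite measure, continuity from above of $\mathcal{L}^n$ gives $\mathcal{L}^n((\overline{\Omega})_{\varepsilon_m})\to\mathcal{L}^n(\overline{\Omega})=\mathcal{L}^n(\Omega)$, the last equality since the boundary of a convex set is a null set. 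The reverse inequality $\mathcal{L}^n(\Omega)\le\liminf_m\mathcal{L}^n(\Omega_m)$ is the place where convexity is indispensable (for slit domains it genuinely fails). By inner regularity of $\mathcal{L}^n$ it suffices to show that every compact $K\subseteq\Omega$ satisfies $K\subseteq\Omega_m$ for all large $m$; then $\mathcal{L}^n(K)\le\liminf_m\mathcal{L}^n(\Omega_m)$ and one lets $K\uparrow\Omega$. Since $K$ is compact and $\Omega$ open and bounded, $K_{2\eta}\subseteq\Omega$ for some $\eta>0$. Fix $y\in K$: the $2n$ points $y\pm\eta e_i$ lie in $\Omega\subseteq(\overline{\Omega}_m)_{\varepsilon_m}$, so $\overline{\Omega}_m$ contains $2n$ points, each within $\varepsilon_m$ of the corresponding $y\pm\eta e_i$. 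The convex hull of the unperturbed points is the $\ell^1$-ball of radius $\eta$ about $y$, which contains the Euclidean ball of radius $\eta/\sqrt{n}$ about $y$; moving each vertex by at most $\varepsilon_m$ shifts and shrinks the inscribed ball by only $O(\varepsilon_m)$, so for $m$ large — with a threshold depending only on $\eta$, hence uniform over $y\in K$ — the convex set $\overline{\Omega}_m$ contains a fixed ball centred at $y$, forcing $y\in\operatorname{int}\overline{\Omega}_m=\Omega_m$. Combining the two inequalities gives $\mathcal{L}^n(\Omega_m)\to\mathcal{L}^n(\Omega)$.

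\emph{Main obstacle.} Everything but the last step is soft (continuity from above of $\mathcal{L}^n$; transporting convexity through Hausdorff limits). The crux is the final geometric claim: upgrading ``$\overline{\Omega}_m$ comes $\varepsilon_m$-close to each of finitely many points surrounding $y$'' to ``$\overline{\Omega}_m$ contains a whole neighbourhood of $y$''. This is false without convexity, and its proof must combine the uniform interior thickness $\eta$ of $K$ in $\Omega$ with the convexity of each $\overline{\Omega}_m$; arranging the quantitative estimate so that the threshold in $m$ is uniform in $y\in K$ is the one spot where care is genuinely required.
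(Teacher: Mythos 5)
Your proof is correct, but it is worth noting that the paper does not prove this lemma at all: it is cited from Blaschke, and the one genuinely nontrivial ingredient you use --- that every compact $K\subseteq\Omega$ satisfies $K\subseteq\Omega_m$ for all large $m$ --- is stated and proved separately in the paper as Theorem \ref{2_2}, via a contradiction argument using Hahn--Banach separation of an excluded point $x_m\in K\setminus\Omega_m$ from the convex set $\Omega_m$. Your route to that inclusion is different and more quantitative: you surround $y\in K$ by the $2n$ points $y\pm\eta e_i$, pull them back into $\overline{\Omega}_m$ up to an error $\varepsilon_m$, and argue that the convex hull of the perturbed cross-polytope still contains a fixed ball about $y$. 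This buys an explicit, $y$-uniform threshold ($\varepsilon_m<\eta/(2\sqrt{n})$ suffices), whereas the paper's separation argument is shorter but purely qualitative. The one assertion you leave as an $O(\varepsilon_m)$ claim deserves a line of justification; the cleanest way is the erosion lemma for convex sets: if $C$ is convex and $C_\varepsilon\supseteq B_r(y)$, then $C\supseteq B_{r-\varepsilon}(y)$ (proved by separating a putative excluded point from $C$ by a halfspace and comparing support values). Applying this with $C=\operatorname{conv}\{p_i^{\pm}\}\subseteq\overline{\Omega}_m$, whose $\varepsilon_m$-neighbourhood contains $\operatorname{conv}\{y\pm\eta e_i\}\supseteq B_{\eta/\sqrt{n}}(y)$, gives exactly what you assert, together with $\operatorname{int}\overline{\Omega}_m=\Omega_m$ for open convex $\Omega_m$. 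The rest of your argument (transport of convexity through the two $\varepsilon_m$-inclusions; the upper volume bound by continuity from above plus nullity of the convex boundary, for which the paper's Lemma \ref{2_4} can be quoted) is sound; the only cosmetic caveat is that convexity is literally obtained for $\overline{\Omega}$ and then for $\operatorname{int}\overline{\Omega}$, which is how the lemma is actually used in Section \ref{sec:3}, where $\Omega_f$ is defined as the interior of the Hausdorff limit.
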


The next Theorem will greatly help in showing that the limit of some sequences considered in the proof of the main result will satisfy a differential equation weakly. The proof is inspired by \cite[Prop. 10]{Willis07}.
\begin{theorem}
 \label{2_2}
 Let $\Omega_m\subseteq \R^n$ be a sequence of convex open domains converging w.r.t. \eqref{eq:2_1_2} to an open convex $\Omega\subseteq \R^n$. Let $K\subset \Omega$ be compact in $\R^n$. Then there exists an $m_0\in\N$, such that for all $m\geq m_0$ we have
 $$K\subset \Omega_m.$$
\end{theorem}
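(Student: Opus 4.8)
The plan is to argue by contradiction. Suppose the conclusion fails: then there is a subsequence (not relabelled) and points $x_m \in K \setminus \Omega_m$ for every $m$. Since $K$ is compact, after passing to a further subsequence we may assume $x_m \to x_0 \in K$. Because $K \subset \Omega$ and $\Omega$ is open, there is $\delta > 0$ with $\overline{B_{2\delta}(x_0)} \subset \Omega$, and for $m$ large $x_m \in B_\delta(x_0)$, so in fact $\overline{B_\delta(x_m)} \subset \Omega$ for large $m$. The goal is to show this forces $x_m \in \Omega_m$ eventually, contradicting $x_m \notin \Omega_m$.

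Here is where convexity enters, in the spirit of the separation/supporting-hyperplane argument. Since $\Omega_m$ is open and convex and $x_m \notin \Omega_m$, there is a supporting hyperplane: a unit vector $\nu_m$ and a constant $c_m$ with $\langle \nu_m, y \rangle \le c_m$ for all $y \in \Omega_m$ and $\langle \nu_m, x_m \rangle \ge c_m$; equivalently the open half-space $H_m := \{ y : \langle \nu_m, y\rangle > c_m \}$ is disjoint from $\Omega_m$ but (up to the boundary) contains $x_m$. After passing to a subsequence, $\nu_m \to \nu_0$ for some unit vector $\nu_0$. Now pick an interior point: since $\overline{B_\delta(x_m)} \subset \Omega$ and $\Omega$ is open, the point $z_m := x_m + \tfrac{\delta}{2}\nu_m$ lies in $\Omega$, and moreover $B_{\delta/2}(z_m) \subset \overline{B_\delta(x_m)}\subset \Omega$. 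Crucially $\langle \nu_m, z_m \rangle = \langle \nu_m, x_m\rangle + \tfrac{\delta}{2} \ge c_m + \tfrac{\delta}{2}$, so the whole ball $B_{\delta/4}(z_m)$ lies in $H_m$ and hence is disjoint from $\Omega_m$. The contradiction will come from Hausdorff convergence: we will produce a fixed ball sitting inside $\Omega$ but outside $\Omega_m$ for arbitrarily large $m$, which is incompatible with $d_H(\overline{\Omega}_m, \overline{\Omega}) \to 0$.

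To make that precise, I would like the balls $B_{\delta/4}(z_m)$ to accumulate on a fixed ball. Since $x_m \to x_0$ and $\nu_m \to \nu_0$, we have $z_m \to z_0 := x_0 + \tfrac{\delta}{2}\nu_0$, so for $m$ large $B_{\delta/8}(z_0) \subset B_{\delta/4}(z_m) \subset \R^n \setminus \Omega_m$, while also $B_{\delta/8}(z_0) \subset B_{\delta/4}(z_m) \subset \Omega$ (adjusting the radius/closeness constants as needed so that $\overline{B_\delta(x_m)}\subset\Omega$ still holds; note $z_0 \in \overline{B_\delta(x_0)} \subset \Omega$ and $\Omega$ open gives a genuine ball around $z_0$ in $\Omega$). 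Fix a point $p \in B_{\delta/8}(z_0) \subset \Omega$ with $d(p, \partial\Omega) =: \rho > 0$ and also $d(p, \overline{\Omega}_m) \ge \operatorname{dist}(B_{\delta/8}(z_0), \Omega_m)$; actually more simply, $p \notin \overline{\Omega_m}$ would require care since supporting hyperplanes only give $p \notin \Omega_m$, so instead I take $p$ with $B_{\delta/16}(p) \subset \R^n\setminus\Omega_m$, giving $d(p, \overline{\Omega}_m) \ge \delta/16$. But $p \in \Omega \subset \overline{\Omega}$, so $d_H(\overline{\Omega}_m, \overline{\Omega}) \ge d(p, \overline{\Omega}_m) \ge \delta/16$ for all large $m$, contradicting $d_H(\overline{\Omega}_m,\overline{\Omega}) \to 0$. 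This proves the claim.

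The main obstacle — really the only subtle point — is bookkeeping with the various radii and with the distinction between $\Omega_m$ and its closure: supporting hyperplanes of the open convex set $\Omega_m$ only guarantee that an \emph{open} half-space avoids $\Omega_m$, so one must shrink to a slightly smaller ball to get positive distance from $\overline{\Omega}_m$ and thereby contradict Hausdorff convergence quantitatively. Everything else is the standard compactness-plus-separation routine. (One should also make sure $\Omega_m \ne \emptyset$ so that supporting hyperplanes exist; if $\Omega_m = \emptyset$ for infinitely many $m$ then $d_H(\overline{\Omega}_m,\overline{\Omega})\to 0$ forces $\Omega = \emptyset$, contradicting $K \subset \Omega$ with $K$ — if nonempty — nonempty, and if $K = \emptyset$ the statement is trivial.)
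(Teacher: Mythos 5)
Your proposal is correct and follows essentially the same route as the paper: argue by contradiction, use compactness of $K$ to get a uniform radius $\delta$ with $B_\delta(x_m)\subset\Omega$, separate $x_m$ from the open convex $\Omega_m$ by a hyperplane, push $x_m$ a distance $\delta/2$ along the outward normal to obtain a point of $\Omega$ at definite distance from $\Omega_m$, and contradict $d_H(\overline{\Omega}_m,\overline{\Omega})\to 0$. The only (harmless) difference is that you additionally extract convergent subsequences $x_m\to x_0$, $\nu_m\to\nu_0$ to pin down a fixed excluded ball, which is not needed: the lower bound $d_H(\overline{\Omega}_m,\overline{\Omega})\geq\delta/4$ already follows for each large $m$ from the varying points $z_m\in\Omega$ with $d(z_m,\Omega_m)\geq\delta/4$.
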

\begin{proof} 
 We proceed by contradiction and assume after possibly choosing a subsequence, that
 $$\forall m\in\N\ \exists x_m\in K:\ x_m\notin \Omega_m.$$
 Since $K$ is compact, there exists an $r>0$ independent of $m$, such that
 $$B_r(x_m)\subseteq \Omega \mbox{ for all }m\in\N.$$
 Since $\Omega_m$ is convex, the Hahn-Banach seperation theorem, see e.g. \cite[Thm 3.4]{FARudin}, yields $L_m\in\partial B_1(0)$, $\alpha_m\in\R$, such that
 $$\Omega_m\subseteq \{y\in \R^n|\ \langle L_m,y\rangle \geq \alpha_m\}\mbox{ and } \langle L_m,x_m\rangle \leq\alpha_m.$$
 Here $\langle\cdot,\cdot,\rangle$ denotes the euclidean scalar product. By e.g. setting $y_m:=x_m -\frac{r}{2}L_m$ (see Figure \ref{2_1}), we obtain an $y_m\in B_r(x_m)\subseteq \Omega$ with
 $$d(y_m, \Omega_m)\geq\frac{r}{2}.$$
 \begin{figure}[h]
 \centering 
\includegraphics{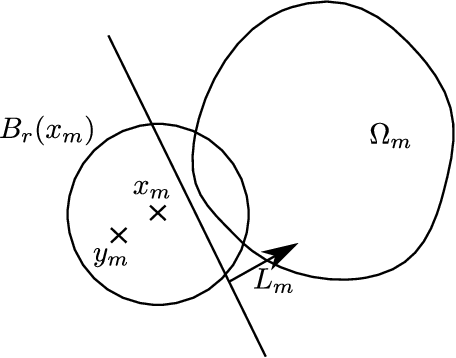}  
\caption{Seperating $x_m$ from $\Omega_m$ by Hahn-Banach.}
\label{fig:2_1}
\end{figure}
 
 Therefore
 $$y_m\notin (\Omega_m)_{\frac{r}{4}}.$$
 Since $y_m\in\Omega$, we have by the definition of Hausdorff distance
 $$d_H(\Omega,\Omega_m)\geq \frac{r}{4}.$$
This implies that $\Omega_m\nrightarrow \Omega$ w.r.t. Hausdorff distance, which is a contradiction.
 \end{proof}

\begin{remark}
 \label{2_3}
 Theorem \ref{2_2} fails without convexity. Let $\Omega$ be any open set and let $x\in \Omega$. Then we define the sequence
 $$\Omega_R:=\Omega-B_R(x).$$
 Then we have  (see also Figure \ref{fig:2_2})
 $$\Omega_R\rightarrow \Omega\mbox{ w.r.t. }d_H \mbox{ for }R\downarrow 0.$$

\begin{figure}[h]
 \centering 
\includegraphics[height=3cm]{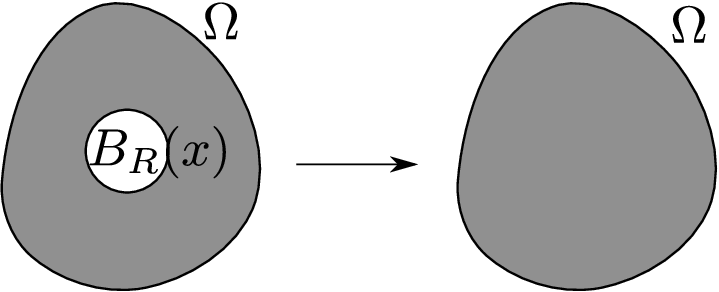}  
\caption{Counterexample for Thm. \ref{2_2} without convexity.}
\label{fig:2_2}
\end{figure} 
If we now choose any compact set $K\subset \Omega$ containing $x$, the result of Theorem \ref{2_2} cannot be expected.
\end{remark}

Let us also remark, that the boundary of a convex set is Lipschitz and therefore of Lebesgue measure zero:
\begin{lemma}
 \label{2_4}
 Let $C\subseteq\R^n$ be convex. Then $\partial C$ is a Lipschitz boundary and therefore
 $$\mathcal{L}^n(\partial C)=0.$$
\end{lemma}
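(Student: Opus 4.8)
The plan is to establish the local graph structure of $\partial C$ from supporting hyperplanes and then read off both assertions from it. First I would dispose of the degenerate case: if $\operatorname{int}C=\emptyset$ then, by convexity, $\overline C$ lies in an affine hyperplane, so $\mathcal{L}^n(\partial C)\le\mathcal{L}^n(\overline C)=0$ and nothing else is claimed; hence assume $\operatorname{int}C\ne\emptyset$ and fix $p_0\in\operatorname{int}C$ (for the application $C$ is a convex domain with $\mathcal{L}^n(C)=c_1>0$, so this is the only relevant case). Let $x_0\in\partial C$. By the supporting hyperplane theorem --- a standard consequence of Hahn--Banach, \cite[Thm 3.4]{FARudin}, already used in the proof of Theorem \ref{2_2} --- there is a unit vector $\nu$ with $\langle\nu,y-x_0\rangle\le 0$ for all $y\in C$, and since $p_0\in\operatorname{int}C$ the inequality is strict at $y=p_0$. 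After a rotation we may take $\nu=e_n$. Then, writing points as $y=(y',y_n)\in\R^{n-1}\times\R$, on a neighbourhood $U\times(-a,a)$ of $x_0$ the boundary $\partial C$ coincides with the graph of $g(y'):=\sup\{t\in\R\mid (y',t)\in\overline C\}$, and $C$ lies on the side $y_n<g(y')$.

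The function $g$ is concave, because $\overline C$ is convex: for $y_1',y_2'\in U$ and $\lambda\in[0,1]$, approximating the suprema by points of $C$ and using convexity gives $g(\lambda y_1'+(1-\lambda)y_2')\ge\lambda g(y_1')+(1-\lambda)g(y_2')$. A finite concave (or convex) function is locally Lipschitz on the interior of its domain --- local boundedness, which is immediate here, forces a local Lipschitz bound via the standard difference-quotient estimate. Hence near every boundary point $\partial C$ is, up to a rotation, the graph of a Lipschitz function with $C$ on one side, i.e. $\partial C$ is a Lipschitz boundary.

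It remains to deduce $\mathcal{L}^n(\partial C)=0$. A Lipschitz graph $\{(y',g(y'))\mid y'\in U\}$ over $U\subseteq\R^{n-1}$ has vanishing $\mathcal{L}^n$-measure: it is the image of $U$ under the Lipschitz map $y'\mapsto(y',g(y'))$, hence has $\sigma$-finite $(n-1)$-dimensional Hausdorff measure, and $\mathcal{L}^n$ vanishes on such sets. Since $\partial C$ is covered by countably many such graph neighbourhoods (second countability of $\R^n$), it too satisfies $\mathcal{L}^n(\partial C)=0$. (Alternatively, when $C$ is bounded one may argue in one shot via the Minkowski gauge $p(x)=\inf\{\lambda>0\mid x\in\lambda C\}$, which is globally Lipschitz once $0\in\operatorname{int}C$, with $\partial C=\{p=1\}=\{\sigma/p(\sigma)\mid\sigma\in\partial B_1(0)\}$ a Lipschitz image of the sphere.) The only non-formal ingredient throughout is the local Lipschitz continuity of finite convex functions, together with the observation that the nonempty-interior hypothesis is exactly what makes the supporting-hyperplane (or gauge) construction yield a genuine Lipschitz graph; the remaining steps are routine.
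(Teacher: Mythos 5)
Your proposal is correct and follows essentially the same route as the paper's proof: represent $\partial C$ locally as the graph of a convex (concave) function, invoke local Lipschitz continuity of such functions, and cover $\partial C$ by countably many such graph neighbourhoods to conclude $\mathcal{L}^n(\partial C)=0$. You simply carry out in full the steps the paper only sketches (supporting hyperplanes, the degenerate empty-interior case, and why Lipschitz graphs are $\mathcal{L}^n$-null), which is all consistent with the intended argument.
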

\begin{proof}
 Since $C$ is convex, $\partial C$ is locally a graph of a convex function $f$. 
 $f$ is therefore also locally Lipschitz.
 By covering the boundary with at most countable many balls we then obtain 
 $\mathcal{L}^n(\partial C)=0$.
\end{proof}

\section{Proof of the main Theorem}
\label{sec:3}
\begin{proof}
 Let $\Omega_m\in M$ be a minimising sequence for $\gamma_f$, i.e.
 $$\gamma_f(\Omega_m)\rightarrow\inf_{\Omega\in M}\gamma_f(\Omega),\mbox{ for }m\rightarrow\infty.$$
 Then $\overline{\Omega}_m\subseteq \overline{B_R(0)}$ and after relabeling $\overline{\Omega}_m$ converges w.r.t. the Hausdorff distance $d_H$ to a compact set $K\subseteq \overline{B_R(0)}$. By Lemma \ref{2_1} $K$ is convex and $\mathcal{L}^n(K)=c_1>0$. Hence by Lemma \ref{2_4}
 $$\mathcal{L}^n(\partial K)=0$$
 and we set $\Omega_f:=\dot{K} = K-\partial K$, i.e. the topological inner part of $K$. Since the measure of it is positive, it is not empty. Furthermore it is still convex and satisfies
 $$\mathcal{L}^n(\Omega_f)=c_1,$$
 hence $\Omega_f\in M$. Now we need to show a lower semi-continuity estimate. For this let 
 $$\gamma > \inf_{\Omega\in M}\gamma_f(\Omega)$$
 be fixated. For $m$ big enough we have
 $$\gamma > \gamma_f(\Omega_m)\geq - \mu_1(\Omega_m),$$
 hence we have unique weak solutions $u_m\in W^{2,2}_0(\Omega_m)$ to \eqref{eq:1_1} on $\Omega_m$. We extend these by $0$ to the whole of $B_R(0)$. Therefore we have
 $$u_m\in W^{2,2}_0(B_R(0)).$$
 We find an $\alpha>0$ such that after choosing a subsequence and relabeling, we obtain
 $$\gamma-\alpha > \gamma_f(\Omega_m)\geq -\mu_1(\Omega_m).$$
 This yields
 $$\gamma - \alpha + \mu_1(\Omega_m)>0\mbox{ for all }m\in\N.$$
 If we now choose $\delta,\varepsilon'>0$ small enough, we also have
 $$(1-\delta)\mu_1(\Omega_m) + \gamma-\varepsilon'>0.$$
 Since $C^\infty_0(\Omega_m)$ is dense in $W^{2,2}_0(\Omega_m)$, the solution $u_m$ is an admissible test function. Hence Poincar\'{e}s and Youngs inequality yield
 \begin{align*}
  \int_{\Omega_m}(\Delta u_m)^2 + \gamma|\nabla u_m|^2 =& \int_{\Omega_m}f u_m\\
  \leq&C(\varepsilon,\operatorname{diam}(\Omega_m))\int_{\Omega_m}f^2 + \varepsilon\int_{\Omega}|\nabla u_m|^2
 \end{align*}
for every $\varepsilon>0$. Rearranging yields with the above choosen $\delta>0$
$$\delta \int_{\Omega_m}(\Delta u_m)^2 + (1-\delta)\int_{\Omega_m}(\Delta u_m)^2 + (\gamma - \varepsilon)\int_{\Omega_m}|\nabla u_m|^2 \leq C\int_{\Omega_m}f^2.$$
By the definition of $\mu_1(\Omega_m)$, see \eqref{eq:1_3}, we obtain
$$\delta \int_{\Omega_m}(\Delta u_m)^2 +((1-\delta)\mu_1(\Omega_m) + (\gamma-\varepsilon))\int_{\Omega_m}|\nabla u_m|^2\leq C(\varepsilon,\operatorname{diam}(\Omega_m))\int_{\Omega_m}f^2.$$
Choosing $ \varepsilon<\varepsilon'$ yields a constant $C=C(\varepsilon,R,f)>0$ such that
$$\|u_m\|_{W^{2,2}_0(B_R(0))}\leq C.$$
After choosing a subsequence and relabeling, we can assume there exists a $u_0\in W^{2,2}_0(B_R(0))$, such that
\begin{align*}
 u_m\rightarrow u_0&\mbox{ weakly in }W^{2,2}_0(B_R(0))\\
 u_m\rightarrow u_0&\mbox{ strongly in }W^{1,2}_0(B_R(0))\\
 u_m\rightarrow u_0&\mbox{ uniformely on }B_R(0).
\end{align*}
Here $n=2,3$ and the Sobolev embedding has been applied.
Since $u_m\geq 0$, the pointwise convergence yields $u_0\geq 0$ almost everywhere. Now we show, that $u_0$ satisfies \eqref{eq:1_2} on $\Omega_f$:\\
Let $\varphi\in C^\infty_0(\Omega_f)$ be arbitrary. Then
$$\operatorname{spt}(\varphi)\subset \Omega_f$$
is a compact set. By Theorem \ref{2_2} we find an $m_0\in\N$, such that for all $m\geq m_0$ we have
$$\operatorname{spt}(\varphi)\subset \Omega_m.$$
Since the $u_m$ are solutions to the equation, we therefore have for these $m$
$$0=\int_{B_R(0)}\Delta u_m\Delta\varphi + \gamma\nabla u_m\nabla \varphi - f\varphi.$$
Then the weak convergence in $W^{2,2}_0(B_R(0))$ yields
\begin{align*}
 0=&\int_{B_R(0)}\Delta u_0\Delta\varphi + \gamma\nabla u_0\nabla \varphi - f\varphi\\
 =&\int_{\Omega_f}\Delta u_0\Delta\varphi + \gamma\nabla u_0\nabla \varphi - f\varphi,
\end{align*}
hence $u_0$ satisfies \eqref{eq:1_2} on $\Omega_f$. Now we have to show that $u_0$ also satisfies the boundary values, i.e. $u_0\in W^{2,2}_0(\Omega_f)$. In a first step we show
$$\operatorname{spt}(u_0)\subset \overline{\Omega_f}.$$
Let $x\in B_R(0)$ such that $u_0(x)>0$. Then we find an $m_0(x)\in\N$ such that for all $m\geq m_0$ we have $u_m(x)>0$. In this argument the uniform convergence in the form of pointwise convergence everywhere is central. Hence $x\in \Omega_m$. The Hausdorff convergence yields for all $\varepsilon>0$ a $z_m\in\overline{\Omega_f}$ such that
$$x\in B_\varepsilon(z_m)$$
for $m$ big enough. Since $\overline{\Omega_f}$ is compact, we find a subsequence, such that $z_m\rightarrow z_\varepsilon\in\overline{\Omega_f}$. Furthermore
$$x\in \overline{B_\varepsilon(z_\varepsilon)}.$$
If we choose $\varepsilon:=\frac{1}{k}$ with $k\in\N$, we can again employ the compactness of $\overline{\Omega_f}$, which yields after choosing a subsequence $z_{\frac{1}{k}}\rightarrow z_0\in \overline{\Omega_f}$. By $x\in\overline{B_\frac{1}{k}(z_\frac{1}{k})}$ we have 
$$x=z_0\in \overline{\Omega_f}.$$
Since $x$ was choosen arbitrary, we have
$$\operatorname{spt}(u_0)\subseteq \overline{\Omega_f}.$$
Now we can show $u_0\in W^{2,2}_0(\Omega_f)$. Without loss of generality we assume $0\in \Omega_f$.  We define
$$v_\lambda(x):=u_0(\lambda\cdot x).$$
Since $\Omega_f$ is convex, we have for all $\lambda>1$
$$\operatorname{spt}(v_\lambda)\subsetneq \Omega_f.$$
This yields $v_\lambda\in W^{2,2}_0(\Omega_f)$. Furthermore
$$\|v_\lambda\|_{W^{2,2}_0(\Omega_f)}\leq \lambda^2\|u_0\|_{W^{2,2}_0(B_R(0))}.$$
Hence for $\lambda\downarrow 1$ we find a weakly convergent subsequence, such that
$$v_{\lambda_k}\rightarrow g\in W^{2,2}_0(\Omega_f).$$
Since the dimensions satisfy $n=2,3$, we can employ the Sobolev embedding theorem and by continuity of $u_0$ obtain
$$v_{\lambda_k}\rightarrow u_0\mbox{ pointwise everywhere.}$$
Without a restriction to the dimension, this step could also be proven by inner regularity theory, see e.g. \cite[§ 2.5]{PolyHarmBoundValue}.
Either way the limit $g$ is always $u_0$, irrespective of the choice of subsequence. This yields
$$v_\lambda\rightarrow u_0\mbox{ weakly in }W_0^{2,2}(\Omega_f)\mbox{ for }\lambda\downarrow 1\mbox{ and }u_0\in W^{2,2}_0(\Omega_f).$$
Hence $u_0\in W^{2,2}_0(\Omega_f)$ is a weak solution of \eqref{eq:1_1} w.r.t. our choosen $\gamma$ and $u_0\geq 0$. Therefore
$ \gamma_f(\Omega_f) \leq \gamma. $
Since $\gamma>\inf_{\Omega\in M}\gamma_f(\Omega)$ arbitrary, we have
$$\gamma_f(\Omega_f)\leq \inf_{\Omega\in M}\gamma_f(\Omega) = \lim_{m\rightarrow\infty}\gamma_f(\Omega_m).$$
Hence $\Omega_f$ is a minimiser in $M$.

As an addendum we can also show, that $\inf_{\Omega\in M}\gamma_f(\Omega)\geq-\mu_1(\Omega_f)$. Let $v_1\in W^{2,2}_0(\Omega_f)$ be such that
$$\int_{\Omega_f} |\nabla v_1|\,  =1\mbox{ and }\mu_1(\Omega_f)=\int_{\Omega_f}|\Delta v_1|^2\, .$$
Then there exists a sequence $\varphi_k\in C^\infty_0(\Omega_f)$, such that
$$\varphi_k\rightarrow v_1\mbox{ in }W^{2,2}_0(\Omega_f).$$
Since the support of $\varphi_k$ is compact and in $\Omega_f$ we can apply Theorem \ref{2_2} and find for every $k\in\N$ an $m_0(k)\in\N$, such that for all $m\geq m_0$ 
$$\varphi_k\in C^\infty_0(\Omega_m).$$
Hence
$$\mu_1(\Omega_f)=\lim_{k\rightarrow\infty} \frac{\int_{B_R(0)}|\Delta\varphi_k|^2}{\int_{B_R(0)}|\nabla\varphi_k|^2}\geq \limsup_{m\rightarrow\infty}\mu_1(\Omega_m).$$
Therefore
$$-\mu_1(\Omega_f)\leq-\limsup_{m\rightarrow\infty}\mu_1(\Omega_m)\leq \limsup_{m\rightarrow\infty}\gamma_f(\Omega_m)=\inf_{\Omega\in M}\gamma_f(\Omega)$$
and the result follows.
\end{proof}

\section{Open problems}
\label{sec:4}
In this section we discuss some open problems and possible further lines of inquiry:
\begin{itemize}
 \item Numerics: The proof of \cite[Thm. 1.1]{EichmannSchaetzlePosHighTens} is indirect, i.e. it yields no direct control on $\gamma_f$. Hence providing a sound numerical ansatz for calculating $\gamma_f$ seems out of reach at the moment. Finding such an ansatz would be highly beneficial for further research though.
 \item Interpolation: One possible way of attacking the numerical problem, would be to show the following interpolation property: Given $\Omega\subseteq\R^n$, $f>0$ and $\gamma_1 < \gamma_2$ such that the solution to \eqref{eq:1_1} $u_{\gamma_i}$ is nonnegative ($i=1,2$). 
 Then prove or disprove that for all $\gamma\in]\gamma_1,\gamma_2[$ the solution $u_\gamma$ to \eqref{eq:1_1} is also nonnegative.
 \item Regularity: Completely open is the question of higher regularity of the minimiser $\Omega_f$ of Thm. \ref{1_1}. 
 Since $\Omega_f$ is convex, the boundary is rectifiable but it is unclear if it is smooth. The answer will probably depend on properties of $f$. 
 The hard part is actually finding a good sense of Euler-Lagrange equation for $\Omega_f$, if such a thing even exists. 
 One could possibly try to perform a shape derivative, but since it is unclear how $\Omega\rightarrow\gamma_f(\Omega)$ behaves under small perturbations of $\Omega$, this is not trivial. 
 As a first pointer one may want to look at small smooth perturbations of the ball, see e.g. \cite[§ 6.1.1]{PolyHarmBoundValue} and references therein.
 \item Examples: It should be quite useful to be able to actually calculate some explicit values for $\gamma_f$ for example for an ellipsoid and constant $f$. In dimension one this has been done in \cite{Grunau2002OneDimPosPres}. 
 In higher dimension there is a useful result for the ball in \cite[Thm. 1.4]{LaurencotWalker2015}, although it is not readily applicable, because the assumptions $c_i\leq 0$ in \cite[Eq. 1.12]{LaurencotWalker2015} may be a problem. 
 Similarly \cite[Prop. 2.1]{OmraneKhenissy2014} is not directly applicable.
 \item Symmetry breaking: The Boggio-Hadamards conjecture was substantiated by the ball. If $f$ is radially symmetric, or even constant, one may wonder, if this symmetry transfers over to a minimiser $\Omega_f$. 
 If one chooses $f=0$, then always $\gamma_f(\Omega)=-\mu_1(\Omega)$. One would therefore look for a maximiser of the buckling load, but it is conjectured, that the minimiser is the ball, see e.g. \cite[§3.2]{PolyHarmBoundValue}. Therefore the present author expects a symmetry breaking phenomenon.
 \item Convexity: The proof of Theorem \ref{1_1} heavily relies on the domains $\Omega\in M$ being convex. If we drop this assumption, it is unclear whether such a minimiser still exists. 
 On the other hand by \cite[Thm 1.1]{EichmannSchaetzlePosHighTens} $\Omega\rightarrow\gamma_f(\Omega)$ is still well defined, if $\partial \Omega\in C^4$, even without $\Omega$ being convex. 
 Here the hard part seems to be to actually obtain a suitable control in some suitable topology for a minimising sequence, which allows for a similar result like Theorem \ref{2_2}.
\end{itemize}

\appendix
\section{Positivity for large tension}
\label{sec:A}
By the following theorem $\gamma_f$ is well defined under some regularity conditions.
\begin{theorem}[see Thm. 1.1 in \cite{EichmannSchaetzlePosHighTens}]
 \label{A_1} Let $\Omega\subset\subset \R^n$ be connected and open with $\partial\Omega\in C^4$. Furthermore let $\tau>0$. 
 Then there exists a $\gamma_0=\gamma_0(\Omega,\tau)>0$ such that for all $f\in L^\infty(\Omega)$ with $f\geq 0$ and
 $$\int f\, d\mathcal{L}^n \geq \tau \|f\|_{L^\infty(\Omega)} > 0,$$
 we have that the solution $u_\gamma$ of
 $$\left\{\begin{array}{cc} (-\Delta)^2u_\gamma - \gamma\Delta u_\gamma = f,& \mbox{ in }\Omega\\u=\partial_\nu u=0,&\mbox{ on }\partial\Omega\end{array}\right.$$
 satisfies 
 $$u_\gamma>0\mbox{ in }\Omega\mbox{ for all }\gamma_0\leq \gamma < \infty.$$
\end{theorem}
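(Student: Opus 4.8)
The plan is to treat $\gamma\to\infty$ as a singular perturbation of the Poisson problem $-\Delta v=f$ and to confine the only possible loss of sign to a boundary layer of thickness $\gamma^{-1/2}$, which is then analysed through a blow-up. By homogeneity of the equation in $f$ we may assume $\|f\|_{L^\infty(\Omega)}=1$, so the hypothesis reads $\int_\Omega f\,d\mathcal{L}^n\ge\tau$, and we put $v_\gamma:=\gamma u_\gamma\in W^{2,2}_0(\Omega)$, so that $\tfrac1\gamma\Delta^2 v_\gamma-\Delta v_\gamma=f$ in the weak sense of \eqref{eq:1_2}; it suffices to show $v_\gamma>0$ in $\Omega$ once $\gamma$ exceeds a threshold depending only on $\Omega$ and $\tau$. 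Let $v_0\in W^{1,2}_0(\Omega)$ solve $-\Delta v_0=f$; since $f\in L^\infty$ and $\partial\Omega\in C^4$, elliptic regularity gives $v_0\in C^{1,\alpha}(\overline\Omega)$ with $\|v_0\|_{C^{1,\alpha}}\le C(\Omega)$. Here the two hypotheses on $f$ enter decisively: $\|f\|_\infty=1$ together with $\int f\ge\tau$ force $\mathcal{L}^n(\{f>0\})\ge\tau$, so that for some $\delta_1=\delta_1(\Omega,\tau)>0$ at least half of the mass of $f$ sits in the compact core $\Omega_{\delta_1}:=\{x\in\Omega:\ d(x,\partial\Omega)>\delta_1\}$; representing $v_0$ through the positive Green function of $-\Delta$ on $\Omega$ then yields the uniform bounds $v_0\ge c_1\,d(\cdot,\partial\Omega)$ on $\overline\Omega$ and, by Hopf's lemma, $a:=-\partial_\nu v_0\ge c_1>0$ on $\partial\Omega$, with $c_1=c_1(\Omega,\tau)$ and $\nu$ the outer normal from the statement.

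The first main step is to show $v_\gamma\to v_0$ uniformly on $\overline\Omega$, with a rate and uniformly over all admissible $f$. I would do this with a corrector: in Fermi coordinates $x\leftrightarrow(t,y)$ near $\partial\Omega$, $t=d(x,\partial\Omega)$, one builds from a matched boundary-layer expansion an approximate solution $\tilde v_\gamma\in W^{2,2}_0(\Omega)$ whose leading behaviour in the layer is
\[
\tilde v_\gamma(x)\ \approx\ v_0(x)-\tfrac1{\sqrt\gamma}\,a(y)\bigl(1-e^{-\sqrt\gamma\,t}\bigr),
\]
the exponential being matched to the characteristic equation $r^4-r^2=0$ of the rescaled normal operator $\partial_s^4-\partial_s^2$, $s=\sqrt\gamma\,t$; one arranges, using finitely many further terms, that $\tilde v_\gamma$ meets both clamped conditions exactly, $\|\tilde v_\gamma-v_0\|_{C^0(\overline\Omega)}\le C(\Omega)\gamma^{-1/2}$, and $\tfrac1\gamma\Delta^2\tilde v_\gamma-\Delta\tilde v_\gamma=f+E_\gamma$ with $\|E_\gamma\|_{W^{-2,2}(\Omega)}$ as small as one likes, uniformly over $\|f\|_\infty\le1$. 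Since $v_\gamma-\tilde v_\gamma\in W^{2,2}_0(\Omega)$ is now an admissible test function, testing the equation it solves against itself and using Poincaré and Young gives $\|v_\gamma-\tilde v_\gamma\|_{W^{2,2}_0(\Omega)}\to0$, hence by the Sobolev embedding ($n=2,3$) also $\|v_\gamma-v_0\|_{C^0(\overline\Omega)}\to0$, uniformly in $f$. Consequently, for $\gamma$ large, $v_\gamma\ge c_1\delta_1-o(1)>0$ on the core $\Omega_{\delta_1}$, and $v_\gamma\ge v_0-C\gamma^{-1/2}\ge c_1t-C\gamma^{-1/2}>0$ wherever $t\ge M\gamma^{-1/2}$ for a suitable $M=M(\Omega,\tau)$.

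It remains to treat the inner layer $\{0<t\le M\gamma^{-1/2}\}$, where no maximum principle is available, and there I would argue by blow-up. Put $V_\gamma(s,y):=\sqrt\gamma\,v_\gamma(\gamma^{-1/2}s,\,y)$. The $C^0$-estimate just obtained bounds $V_\gamma$ on bounded $s$-intervals; together with the rescaled equation, which for each frozen $y\in\partial\Omega$ reads $\partial_s^4 V_\gamma-\partial_s^2 V_\gamma\to0$ (the $f$-term and the tangential and curvature terms being lower order), the clamped conditions $V_\gamma(0,y)=\partial_s V_\gamma(0,y)=0$, and the $W^{2,2}$-bounds for $v_\gamma$ rescaled, Schauder estimates give — after a subsequence — $C^2_{\mathrm{loc}}$ convergence $V_\gamma\to V_\infty$ with $V_\infty''''-V_\infty''=0$ on $(0,\infty)$, $V_\infty(0)=V_\infty'(0)=0$, and linear growth $V_\infty(s)\sim a(y)\,s$ as $s\to\infty$ (matching the interior limit $v_0$); hence
\[
V_\infty(s)=a(y)\bigl(s-1+e^{-s}\bigr),
\]
so $V_\infty>0$ on $(0,\infty)$ and $V_\infty''(0)=a(y)\ge c_1>0$. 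Therefore, for $\gamma$ large and uniformly in $f$: on $\{\varepsilon\le s\le M\}$ one gets $v_\gamma>0$ directly from the convergence, while on $\{0\le s\le\varepsilon\}$ the $C^2_{\mathrm{loc}}$ convergence gives $\partial_t^2 v_\gamma=\sqrt\gamma\,\partial_s^2 V_\gamma\ge\tfrac{c_1}{2}\sqrt\gamma>0$, and since $v_\gamma(0,y)=\partial_t v_\gamma(0,y)=0$ a second-order Taylor expansion in $t$ yields $v_\gamma\ge\tfrac{c_1}{4}\sqrt\gamma\,t^2>0$. Assembling the core, outer-layer and inner-layer estimates, $v_\gamma>0$ throughout $\Omega$, hence $u_\gamma=v_\gamma/\gamma>0$ in $\Omega$, for every $\gamma\ge\gamma_0=\gamma_0(\Omega,\tau)$, which is the assertion.

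The main obstacle is the near-boundary analysis of the last two paragraphs: a maximum principle is unavailable for the fourth-order operator, so the sign in the $\gamma^{-1/2}$-layer must be read off the explicit layer profile $s-1+e^{-s}$, which forces one to upgrade the energy-level control of $v_\gamma-\tilde v_\gamma$ (and of the rescaled $V_\gamma$) to $C^2$-type control; on the $C^4$ domain this comes from Schauder estimates for the rescaled operator, but care is needed since the Poisson datum $v_0$ is only $C^{1,\alpha}$, so the matching of layer to interior is genuinely of singular-perturbation type. A second point to watch is that every constant — in particular the final threshold $\gamma_0$ — must depend on $f$ only through $\|f\|_{L^\infty}$ and $\tau$; this is secured by the uniform Hopf bound $a\ge c_1(\Omega,\tau)$ and by running the blow-up as a compactness argument over the weak-$\ast$ compact class $\{0\le f\le1,\ \int_\Omega f\ge\tau\}$. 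An alternative, technically heavier but conceptually parallel route would bypass the corrector and blow-up by establishing uniform pointwise bounds on the Green function $G_\gamma$ of the clamped problem, of the form $G_\gamma\gtrsim\gamma^{-1}$ on the core and $G_\gamma\ge-\gamma^{-1}o(1)$ near $\partial\Omega$.
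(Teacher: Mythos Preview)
The paper does not prove this theorem at all; it is quoted in Appendix~\ref{sec:A} from \cite{EichmannSchaetzlePosHighTens} solely for the reader's convenience, with no argument reproduced. The only information the paper gives about the original proof is the remark in Section~\ref{sec:4} that it is ``indirect, i.e.\ it yields no direct control on $\gamma_f$''. So there is no proof in the present paper to compare your proposal against.

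That said, your route --- a constructive singular-perturbation scheme (matched boundary-layer corrector plus blow-up to the profile $s-1+e^{-s}$) --- is conceptually the opposite of the indirect argument alluded to, and if it went through it would in principle deliver quantitative information on $\gamma_0(\Omega,\tau)$, something the paper explicitly flags as open. Two genuine concerns, however. First, you pass from the energy estimate on $v_\gamma-\tilde v_\gamma$ to a $C^0$ estimate via the embedding $W^{2,2}_0\hookrightarrow C^0$, which forces $n\le 3$; the theorem is stated for all $n$, so this step must be replaced by $L^p$/Schauder bootstrapping on the full fourth-order operator. Second, the blow-up is phrased ``after a subsequence'', yet the conclusion must hold for \emph{all} large $\gamma$ and \emph{all} admissible $f$; you rightly note at the end that this should be run as a contradiction/compactness argument over the weak-$*$ compact class $\{0\le f\le 1,\ \int_\Omega f\ge\tau\}$, but then the argument has effectively become indirect again and the advertised explicit control on $\gamma_0$ evaporates. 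The corrector construction (``finitely many further terms'') is also the real technical crux you identify yourself: with $f\in L^\infty$ only, $v_0$ is merely $C^{1,\alpha}$, which caps how many layer terms one can build and hence how small the residual $E_\gamma$ can be made --- this needs to be made precise before the $C^2_{\mathrm{loc}}$ convergence in the blow-up can be claimed.
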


\phantomsection
\addcontentsline{toc}{section}{Literatur}
\bibliography{bibliography.bib}
\bibliographystyle{plain}


\end{document}